\newtheorem{theorem}{Theorem}
\newtheorem{lemma}{Lemma}
\newtheorem{corollary}[theorem]{Corollary}
\newcommand{\be}{\begin{equation}}
\newcommand{\ee}{\end{equation}}
\newcommand{\bea}{\begin{eqnarray}}
\newcommand{\eea}{\end{eqnarray}}
\newcommand{\eps}{\varepsilon}
\newcommand{\vs}{\vspace{0.5cm}}
\newcommand{\vsv}{\vspace{0.12cm}}
\def\XXint#1#2#3{{\setbox0=\hbox{$#1{#2#3}{\int}$ }
\vcenter{\hbox{$#2#3$ }}\kern-.6\wd0}}
\begin{document}

\title[Complex nilmanifolds and K\"ahler-like connections]{Complex nilmanifolds and K\"ahler-like connections}

\author{Quanting Zhao} \thanks{The research of QZ is partially supported by NSFC with Grant No.11801205
and China Scholarship Council to Ohio State University. The research of FZ is partially supported by a Simons Collaboration Grant 355557.}
\address{Quanting Zhao. School of Mathematics and Statistics \& Hubei Key Laboratory of Mathematical Sciences, Central
China Normal University, Wuhan 430079, China.}

\email{{zhaoquanting@126.com; zhaoquanting@mail.ccnu.edu.cn}}

\author{Fangyang Zheng} \thanks{}
\address{Fangyang Zheng. Department of Mathematics, The Ohio State University, Columbus, OH 43210, USA}
\email{{zheng.31@osu.edu}}

\subjclass[2010]{53C55 22E25 (primary), 53C05 (secondary)}
\keywords{K\"ahler-like; Strominger connection; Chern connection; Riemannian connection; pluriclosed metric; balanced metric; nilmanifold}

\begin{abstract}
In this note, we analyze the question of when will a complex nilmanifold have K\"ahler-like Strominger  (also known as Bismut), Chern, or Riemannian  connection, in the sense that the curvature of the connection obeys all the symmetries of that of a K\"ahler metric. We give a classification in the first two cases and a partial description in the third case. It would be interesting to understand these questions for all Lie-Hermitian manifolds, namely, Lie groups equipped with a left invariant complex structure and a compatible left invariant metric.
\end{abstract}

\maketitle

\tableofcontents

\markleft{Quanting Zhao and Fangyang Zheng}
\markright{Complex nilmanifolds and K\"ahler-like connections}

\section{Introduction}

On a Hermitian manifold $(M^n,g)$, the concept of a metric connection $D$ being {\em K\"ahler-like} traces back to the pioneer work of Gray and others in the 1960s, where they considered various types of special  Hermitian and almost Hermitian metrics when the Riemannian curvature tensor satisfies various symmetry conditions. In \cite{YZ}, Bo Yang and the second named author followed their lead and introduced the term {\em K\"ahler-like} for the Riemannian and Chern connections. Angella, Otal, Ugarte, and Villacampa \cite{AOUV} generalized it to any metric connection on a Hermitian manifold.

For a metric connection $D$ on a Hermitian manifold $(M^n,g)$, its curvature $R^D$ is given by
$$  R^D(X,Y,Z,W) = \langle D_XD_YZ - D_YD_XZ- D_{[X,Y]}Z, \, W \rangle , $$
where $g(\, , \, ) = \langle \, , \, \rangle$ and $X$, $Y$, $Z$, $W$ are tangent vectors in $M^n$. $R^D$ is  skew-symmetric with respect to its first two positions by definition, and is skew-symmetric with respect to its last two positions since $Dg=0$. $D$ is said to be {\em K\"ahler-like,} if $R^D$ satisfies the symmetry conditions
$$ R^D(X,Y,Z,W) = R^D(Z,Y,X,W), \ \ \ R^D(X,Y,JZ,JW)= R^D(X,Y,Z,W), $$
for any tangent vectors $X$, $Y$, $Z$, $W$ in $M^n$. Note that the second condition is always satisfied when $DJ=0$. So for Hermitian connections (namely, those with $Dg=0$ and $DJ=0$), the K\"ahler-like condition simply means that the curvature is symmetric with respect to its first and third positions.

Given a Hermitian manifold $(M^n,g)$, we will denote by $\nabla$, $\nabla^c$, and $\nabla^s$ the Riemannian, Chern, and Strominger (aka Bismut or KT) connection, respectively. Note that $\nabla^s$ is the unique Hermitian connection on $M^n$ whose torsion is totally skew-symmetric. It appeared explicitly in \cite{Strominger} in 1986 (where he called it the H-connection), prior to Bismut's paper \cite{Bismut} which appeared in 1989. For that reason we think it would be more appropriate to call it Strominger connection\footnote{We would like to thank Professor Yau for pointing out this historic fact to us.}, and we shall do so from now on.

These three canonical connections coincide when $g$ is K\"ahler, and are mutually distinct when $g$ is not K\"ahler. For each of these connections, there are K\"ahler-like examples that are non-K\"ahler. Some necessary conditions were obtained, for instance, it was observed in \cite{YZ} that if $(M^n,g)$ is a compact Hermitian manifold that is either Riemannian K\"ahler-like or Chern K\"ahler-like, then the metric $g$ must be balanced. Also, it was conjectured by Angella, Otal, Ugarte, and Villacampa \cite{AOUV} and proved recently by the authors \cite{ZZ} that any Strominger K\"ahler-like manifold is plurisclosed (also known as SKT, or Strong K\"ahler with torsion). However, a full classification of such manifolds seems to be still far away.

In this note, we restrict ourselves to a very special type of Hermitian manifolds, namely, the complex nilmanifolds, and try to understand the K\"ahler-like conditions amongst such manifolds.

For the sake of simplicity, let us call $(G,J,g)$ a {\em Lie-Hermitian manifold} if $G$ is a Lie group, $J$ a left invariant complex structure on $G$, and $g$ a left invariant metric on $G$ compatible with $J$. It is a highly restrictive type of Hermitian manifolds in the sense that it is topologically parallelizable and also admits a flat connection $D$ that is Hermitian (namely, $DJ=Dg=0$). One can simply take a unitary frame of left invariant vector fields on $G$ and use it as the parallel frame to define the connection $D$. Note that $D$ is an Ambrose-Singer connection (in the sense that both its torsion and curvature are parallel under $D$).

It would certainly be a very interesting question and perhaps also a somewhat realistic goal to try to classify all Lie-Hermitian manifolds that are Riemannian, Chern, or Strominger K\"ahler-like. In this note, we will prove the following  statements which are partial answers in the special case when the Lie group $G$ is nilponent.

\begin{theorem} \label{nilBKL}
Let $(G,J,g)$ be a Lie-Hermitian manifold, namely, $G$ is a Lie group equipped with a left invariant complex structure $J$ and a compatible left invariant metric $g$. If $G$ is nilpotent, then $(G,J,g)$ is Strominger K\"ahler-like if and only if the Lie algebra ${\mathfrak g}$ of $G$ is the following type of at most $2$-step nilpotent Lie algebra:

There exists an orthonormal basis $\{ X_1, \ldots , X_s\}$ of ${\mathfrak n} =[{\mathfrak g}, {\mathfrak g}]$ and an orthonormal basis $\{ \eps_1, \ldots , \eps_{2n}\}$ of ${\mathfrak g}$ with $J\eps_i = \eps_{n+i}$ for each $1\leq i\leq n$, such that
$${\mathfrak n} + \!J {\mathfrak n}\, = \mbox{span} \{ \eps_{r+1}, \ldots , \eps_n; \, \eps_{n+r+1}, \ldots , \eps_{2n}\},$$
and positive constants $\lambda_1, \ldots , \lambda_s$ such that the only non-trivial Lie brackets under $\{ \eps\} $ are
$$ [\eps_i, \eps_{n+i} ] = \lambda_i X_i, \ \ \  1\leq  i \leq s .$$
Note that the nonnegative integer $s$ satisfies $n\!-\!r\leq s\leq \min\{ r, 2(n\!-\!r)\}$, and the complex structure $J$ is abelian. The K\"ahler case corresponds to $s=0$ and $r=n$.
\end{theorem}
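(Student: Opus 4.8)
The plan is to pass to the Lie algebra and compute in a unitary frame. Because $J$, $g$, and all three canonical connections are left invariant, the curvature $R^s$ of the Strominger connection is an algebraic object on $\mathfrak{g}$, determined by the Lie bracket together with the Hermitian inner product. First I would fix a unitary frame $e_1, \dots, e_n$ of $(1,0)$-vectors, so that $J e_i = \sqrt{-1}\, e_i$ and $\{e_i, \ol{e}_j\}$ diagonalizes $g$, and record the bracket through structure constants
$$ [e_j, e_k] = \sum_i C^i_{jk}\, e_i, \qquad [e_j, \ol{e}_k] = \sum_i E^i_{jk}\, e_i + \sum_i F^i_{jk}\, \ol{e}_i , $$
where integrability of $J$ has already been used to discard the $\ol{e}$-component of $[e_j, e_k]$, and $C, E, F$ satisfy the usual reality and Jacobi constraints. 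In this language $J$ is abelian precisely when every $C^i_{jk}$ vanishes, while nilpotency of $\mathfrak{g}$ imposes a strict filtration on the admissible index patterns.

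Since $\nabla^s$ is the unique Hermitian connection with totally skew-symmetric torsion, in this invariant setting both its connection matrix and its torsion $3$-form, $c(X,Y,Z) = -d\om(JX, JY, JZ)$, are explicit linear expressions in $C, E, F$; consequently the components of $R^s$ become quadratic polynomials in the structure constants. I would begin by invoking the result of \cite{ZZ} quoted above, that the Strominger K\"ahler-like condition forces $g$ to be pluriclosed, $\partial \ol{\partial} \om = 0$. Written out in the frame this is a first system of quadratic relations among $C, E, F$, and, combined with the nilpotent filtration, the next aim is to show that it already forces $\mathfrak{g}$ to be at most $2$-step, i.e.\ $\mathfrak{n} = [\mathfrak{g}, \mathfrak{g}]$ central.

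With the metric pluriclosed and $\mathfrak{g}$ $2$-step, I would then impose the defining symmetry $R^s(X,Y,Z,W) = R^s(Z,Y,X,W)$ and read it off component by component, where in the unitary frame it amounts to symmetry of the curvature in its first and third holomorphic indices. The heart of the argument is to show that these identities force, first, that all holomorphic brackets vanish, so $C^i_{jk} = 0$ and $J$ is abelian, and second, that the remaining mixed-bracket tensor $E$ can be simultaneously diagonalized after an orthonormal, $J$-preserving change of frame into decoupled pairs. The surviving brackets then take the asserted normal form $[\eps_i, \eps_{n+i}] = \lam_i X_i$ with $\{X_i\}$ an orthonormal basis of $\mathfrak{n}$ and $\lam_i > 0$, and the span identity $\mathfrak{n} + J\mathfrak{n} = \mathrm{span}\{\eps_{r+1}, \dots, \eps_n; \eps_{n+r+1}, \dots, \eps_{2n}\}$ records that the $2(n-r)$ directions hit by nonzero brackets are exactly these. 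I expect this to be the main obstacle: turning one tensorial identity into enough scalar equations to annihilate every $C^i_{jk}$ and to diagonalize $E$ simultaneously is where the nilpotent filtration and the precise shape of the Strominger curvature must be used in tandem, and where a direct expansion produces an unwieldy quadratic system that must be organized carefully.

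It remains to extract the numerical bounds and to settle the converse. The inequalities follow from linear algebra on $\mathfrak{n}$ and $J\mathfrak{n}$: since $\dim \mathfrak{n} = s$ and $\dim J\mathfrak{n} = s$, one has $n - r \le s$ from $\dim(\mathfrak{n} + J\mathfrak{n}) = 2(n-r) \le 2s$, one has $s \le 2(n-r)$ from $\mathfrak{n} \subseteq \mathfrak{n} + J\mathfrak{n}$, and one has $s \le r$ because the source vectors $\eps_1, \dots, \eps_s$ can and must be chosen orthogonal to $\mathfrak{n} + J\mathfrak{n}$, hence among the low-index $\eps$'s. For the converse, I would check directly that any algebra in this normal form is $2$-step nilpotent with abelian $J$, compute $\nabla^s$ and $R^s$ explicitly on it, and verify both that $g$ is pluriclosed and that the K\"ahler-like symmetry holds, with the K\"ahler case $s = 0$, $r = n$ occurring exactly when $\mathfrak{n} = 0$.
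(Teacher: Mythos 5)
Your outline matches the paper's proof at both ends---pass to ${\mathfrak g}$, compute with structure constants in a left invariant unitary frame, obtain the filtered structure equations from pluriclosedness, and finish with linear algebra for the bounds on $s$ plus a direct verification of the converse---but the middle step, which you yourself flag as ``the main obstacle,'' is a genuine gap: you never exhibit a mechanism that turns the symmetry $R^s(X,Y,Z,W)=R^s(Z,Y,X,W)$ into the vanishing of all $C^i_{jk}$ and the simultaneous diagonalization of the mixed brackets, and the direct expansion of $R^s$ in the structure constants is exactly the unwieldy quadratic system you anticipate. The paper avoids that system entirely by using the \emph{full} strength of \cite{ZZ} (Corollary 4 there): Strominger K\"ahler-like implies not only $\partial\overline{\partial}\omega_g=0$, which is the only half you import, but also $\nabla^s T=0$. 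Parallel torsion is a \emph{linear} condition on the first covariant derivatives of $T$, and after the Enrietti--Fino--Vezzoni reduction (so that only $C^{\alpha}_{ik}$ and $D^i_{\alpha k}$ with $i,k\leq r<\alpha$ survive) it gives at once $0=T^{\alpha}_{\alpha k,\overline{k}}=\frac{1}{2}\sum_p |C^{\alpha}_{pk}|^2$, hence $C=0$ and $J$ abelian, and then says precisely that the matrices $D_{\alpha}=(D^j_{\alpha i})$ are commuting and normal, hence simultaneously unitarily diagonalizable. Without invoking the $\nabla^s T=0$ consequence (or reproving it), your plan asserts the hardest deductions rather than proving them. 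Note also that your intermediate claim that pluriclosedness plus nilpotency forces ${\mathfrak g}$ to be at most $2$-step is correct but is itself the nontrivial result of \cite{EFV}, which should be cited rather than re-derived.

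A second, smaller omission: simultaneous diagonalization alone yields $d\varphi_{\alpha}=\sum_i Y_{i\alpha}\,\varphi_i\overline{\varphi}_i$ but does \emph{not} make the bracket images orthogonal, so the orthonormality of $\{X_1,\ldots,X_s\}$ in the stated normal form does not ``fall out'' of the frame change as your ``decoupled pairs'' phrasing suggests. It requires a second application of pluriclosedness in the reduced frame, computing $\partial\overline{\partial}(\varphi_{\alpha}\overline{\varphi}_{\alpha})$ to obtain the relations (\ref{eq:Yfinal}), which translate into pairwise orthogonality of the real vectors $x_i=(-v_i,u_i)\in{\mathbb R}^{2(n-r)}$; this orthogonality is also what produces the bound $s\leq 2(n\!-\!r)$, while $s\geq n\!-\!r$ comes from the rank of the matrix $(Y_{i\alpha})$. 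Your argument for $s\leq r$ is fine in spirit, but in the paper it is simply the count that the $Y$-matrix has $r$ rows, of which $s$ are nonzero. Finally, your closing step (verify the converse directly on the normal form) agrees with the paper, which treats it as immediate.
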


We can write the above in the more familiar complex notations. Let $e_i = \frac{1}{\sqrt{2}}(\eps_i \!-\! \sqrt{-1}\eps_{n\!+\!i})$ be the unitary frame and $\varphi$ the dual coframe. The above theorem simply says that when $G$ is nilpotnet, the Lie-Hermitian manifold  $(G,J,g)$ is Strominger K\"ahler-like if and only if there exists an left invariant unitary coframe $\varphi$ and integers $0\leq s\leq r\leq n$ with $(n\!-\!r)\leq s\leq 2(n\!-\!r)$ such that
\begin{equation}
 d\varphi_i = 0, \ \ 1\leq i\leq r; \ \ \ \ \ d\varphi_{\alpha} = \sum_{i=1}^s Y_{i\alpha} \varphi_i \overline{\varphi}_i, \ \ r\!+\!1\leq \alpha \leq n,
 \end{equation}
where $r$ is exactly the complex dimension of the space of all $d$-closed left invariant $(1,0)$-forms and the constants $Y_{i\alpha}$ satisfies
\begin{equation}\label{eq:Yfinal}
 \sum_{\alpha=r+1}^n (Y_{i\alpha}  \overline{Y_{k\alpha} } +  \overline{Y_{i\alpha} } Y_{k\alpha} ) =0 \ \ \ \ \forall \ 1\leq i\neq k \leq s.
 \end{equation}
These constants are related to the orthonormal basis $\{ X_i\} $ of ${\mathfrak n}$ by
$$\lambda_iX_i = \sqrt{-1} \sum_{\alpha=r+1}^n (Y_{i\alpha} e_\alpha - \overline{Y_{i\alpha}}\,\overline{e_\alpha}) .$$

In low dimensions, one can express these constants more explicitly and write down the following ``normal forms":

\begin{corollary}\label{nilBKL_lowd}
Let $(G,J,g)$ be a nilpotent Lie-Hermitian manifold of complex dimension $n \leq 6$. It is Strominger K\"ahler-like if and only if it admits a left invariant unitary coframe $\varphi$ such that
\begin{enumerate}
\item when $n=2$, \[ \begin{cases}  d\varphi_1=0, \\ d\varphi_2=\lambda \,\varphi_1 \overline{\varphi}_1,  \end{cases} \qquad\qquad\ \]
\item when $n=3$, \[ \begin{cases} d\varphi_1=d\varphi_2=0, \\ d\varphi_3=\lambda \,\varphi_1 \overline{\varphi}_1 + ia\,\varphi_2 \overline{\varphi}_2,   \end{cases} \]
\item when $n=4$, either
\[ \begin{cases} d\varphi_1=d\varphi_2= d\varphi_3=0, \\ d\varphi_4=\lambda \,\varphi_1 \overline{\varphi}_1 + ia\, \varphi_2 \overline{\varphi}_2,  \end{cases} \]
 or
\[ \ \begin{cases} d\varphi_1=d\varphi_2= 0, \\ d\varphi_3=\lambda_1 \,\varphi_1 \overline{\varphi}_1  + ia\,\varphi_2 \overline{\varphi}_2,  \\
 d\varphi_4= \qquad\qquad\ \lambda_2\, \varphi_2 \overline{\varphi}_2 , \end{cases} \]
 \item when $n=5$,  either
\[ \quad\quad \begin{cases}  d\varphi_1=d\varphi_2= d\varphi_3=d\varphi_4=0, \\
d\varphi_5=\lambda \,\varphi_1 \overline{\varphi}_1 + ia\, \varphi_2 \overline{\varphi}_2 , \end{cases}\]
 or
\[ \qquad \qquad \quad\begin{cases} d\varphi_1=d\varphi_2= d\varphi_3=0, \\
d\varphi_4=\lambda_1 \,\varphi_1 \overline{\varphi}_1 + ia\, \varphi_2 \overline{\varphi}_2 + ib\, \varphi_3 \overline{\varphi}_3 ,  \\
d\varphi_5 =  \qquad   \lambda_2\, \varphi_2 \overline{\varphi}_2 + (ic\!-\!\frac{ab}{\lambda_2})\, \varphi_3 \overline{\varphi}_3,
\end{cases} \]
 \item when $n=6$, either
\[ \qquad\qquad\quad\begin{cases} d\varphi_1=d\varphi_2= d\varphi_3=d\varphi_4= d\varphi_5= 0, \\
d\varphi_6=\lambda \,\varphi_1 \overline{\varphi}_1 + ia\, \varphi_2 \overline{\varphi}_2 ,  \end{cases}\]
 or
\[ \qquad\qquad\qquad\qquad\qquad\begin{cases} d\varphi_1=d\varphi_2=d\varphi_3=d\varphi_4= 0, \\
d\varphi_5 = \lambda_1 \,\varphi_1 \overline{\varphi}_1  + ia\, \varphi_2 \overline{\varphi}_2 + ib\, \varphi_3 \overline{\varphi}_3  + ic\, \varphi_4 \overline{\varphi}_4 ,  \\
d\varphi_6=  \lambda_2\, \varphi_2 \overline{\varphi}_2 +(ix\!-\!\frac{ab}{\lambda_2}) \, \varphi_3 \overline{\varphi}_3 + (iy\!-\!\frac{ac}{\lambda_2})\, \varphi_4 \overline{\varphi}_4   , \end{cases}\]
 or
\[ \qquad \qquad \qquad \quad\begin{cases} d\varphi_1=d\varphi_2=d\varphi_3= 0, \\
d\varphi_4 =\lambda_1 \,\varphi_1 \overline{\varphi}_1 + ia\, \varphi_2 \overline{\varphi}_2 + ib\, \varphi_3 \overline{\varphi}_3 ,  \\
d\varphi_5 = \ \ \ \ \ \ \ \ \ \ \ \ \ \lambda_2 \,\varphi_2 \overline{\varphi}_2 + (ic\!-\!\frac{ab}{\lambda_2}) \varphi_3 \overline{\varphi}_3,\\
d\varphi_6=  \ \ \ \ \ \ \ \ \ \ \ \ \ \ \ \ \ \ \ \ \ \ \ \ \ \ \lambda_3\, \varphi_3 \overline{\varphi}_3 ,  \end{cases}\]
\end{enumerate}
where the constants $a$, $b$, $c$, $x$, $y$, $\lambda $, $\lambda_1$, $\lambda_2$,  $\lambda_3$ are all real, with $\lambda \geq 0$, $\lambda_1>0$, $\lambda_2>0$, $\lambda_3>0$.
\end{corollary}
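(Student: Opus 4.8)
The plan is to derive the corollary directly from the complex form of Theorem \ref{nilBKL}, reading it as a normal-form problem for the coefficient matrix $Y=(Y_{i\alpha})$, $1\le i\le s$, $r+1\le\alpha\le n$, under the residual change-of-coframe freedom. Write $m=n-r$. First I would record the two structural facts the admissible $Y$ must satisfy. Maximality of $r$ (the dimension of the space of closed left-invariant $(1,0)$-forms) forces the $m$ columns of $Y$ to be $\C$-linearly independent: if $\sum_\alpha c_\alpha Y_{i\alpha}=0$ for all $i$ with the $c_\alpha$ not all zero, then $\psi=\sum_\alpha c_\alpha\varphi_\alpha$ would be a closed $(1,0)$-form outside $\mathrm{span}(\varphi_1,\dots,\varphi_r)$, a contradiction; hence $\mathrm{rank}_{\C}Y=m$ and $m\le s$. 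On the other hand, writing each row $y_i\in\C^m\cong\R^{2m}$, condition \eqref{eq:Yfinal} says exactly that $\re\langle y_i,y_k\rangle=0$ for $i\ne k$, i.e. the $s$ nonzero rows are mutually orthogonal as real vectors in $\R^{2m}$, whence $s\le 2m$. Together with $s\le r=n-m$ these bound the admissible triples $(r,s,m)$ to a short finite list for each $n\le 6$.

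Next I would identify the residual gauge group. A unitary change of the non-closed coframe, $\varphi_\alpha\mapsto\sum_\beta W_{\alpha\beta}\varphi_\beta$, replaces $Y$ by $YW^{T}$ with $W^T\in U(m)$; this preserves both the diagonal shape $\sum_i Y_{i\alpha}\varphi_i\overline{\varphi}_i$ and the real Gram matrix of the rows. To keep the brackets diagonal, any transformation of the appearing forms $\varphi_1,\dots,\varphi_s$ must be monomial, which on $Y$ amounts to a permutation of rows (phases acting trivially). So I am free to act by $U(m)$ on columns and to permute rows, and nothing else. The reduction is then a flag/Gram–Schmidt normalization carried out entirely with column operations after one initial row permutation: using $\mathrm{rank}_{\C}Y=m$, reorder the rows so that the top $m\times m$ block is invertible; rotate $y_1$ to $(\lambda_1,0,\dots,0)$ with $\lambda_1>0$; real-orthogonality to $y_1$ then forces the first entry of every later row to be purely imaginary. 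Recursively, the stabilizer $U(m-j+1)$ places $y_j$ (for $j\le m$) along the $j$-th axis with a positive real diagonal entry $\lambda_j$ — positivity following from $\C$-independence of $y_1,\dots,y_m$ — while orthogonality of $y_j$ to the earlier rows determines the real parts of its preceding entries through a triangular system with nonzero diagonal, the imaginary parts remaining free real parameters. The determined real parts are precisely the correction terms such as $-\tfrac{ab}{\lambda_2}$.

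For the rows $y_j$ with $j>m$ no new axis is left: orthogonality to $y_1,\dots,y_m$ fixes all their real parts, their imaginary parts stay free, and the remaining orthogonality relations among $y_{m+1},\dots,y_s$ can no longer be absorbed, surviving as $\binom{s-m}{2}$ instances of \eqref{eq:Yfinal}. I would then run this procedure over the admissible $(r,s,m)$ for each $n\le 6$: for $m=1$ one obtains $d\varphi_n=\lambda\,\varphi_1\overline{\varphi}_1+ia\,\varphi_2\overline{\varphi}_2$; for $m=2$ the triangular rows $(\lambda_1,0)$, $(ia,\lambda_2)$, $(ib,\,ic-\tfrac{ab}{\lambda_2})$, $(ic,\,iy-\tfrac{ac}{\lambda_2})$; and for $m=3$ the $3\times 3$ triangular form — exactly the displayed cases. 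The smaller values of $s$ are recovered from the maximal master form ($s=\min(2m,r)$) by letting the appropriate parameters vanish, which is why each $n$ yields precisely the stated number of forms.

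The hard part is bookkeeping rather than any single deep step: one must check that the flag reduction never requires a non-monomial row operation (so the diagonal bracket shape is preserved throughout), and one must correctly separate the instances of \eqref{eq:Yfinal} that are solved into correction terms from those that remain. Concretely, for $n\le 6$ only the top case $n=6$, $(r,s,m)=(4,4,2)$, has $s-m=2$, so exactly one relation, namely $\re\langle y_3,y_4\rangle=0$, i.e. $bc\bigl(1+\tfrac{a^2}{\lambda_2^2}\bigr)+xy=0$, is not absorbed and survives as the residual part of \eqref{eq:Yfinal} constraining the displayed parameters; a dimension count ($2\cdot 4\cdot 2-\binom{4}{2}-\dim U(2)=6$ free real parameters) confirms this is the only surviving constraint. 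Finally I would verify exhaustiveness — that every admissible configuration reduces to exactly one form on the list — which completes the classification.
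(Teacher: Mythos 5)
Your proposal is correct and follows exactly the route the paper intends: the authors omit the proof of Corollary \ref{nilBKL_lowd}, calling it ``an easy consequence of the condition (\ref{eq:Yfinal}) and the above considerations,'' and your reduction---bounding the admissible triples via $n-r\le s\le\min\{r,2(n-r)\}$, then normalizing the matrix $Y$ by right multiplication by $U(n-r)$ together with row permutations in a Gram--Schmidt fashion, with the real parts of the sub-diagonal entries (the terms $-\frac{ab}{\lambda_2}$, $-\frac{ac}{\lambda_2}$) solved from (\ref{eq:Yfinal}) through a triangular system with diagonal $\lambda_1,\lambda_2,\dots>0$---is precisely that omitted computation, and all the displayed normal forms check out. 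One point you identify deserves emphasis: in the $n=6$ case with $(r,s,m)=(4,4,2)$ the relation $\re\langle y_3,y_4\rangle=bc\bigl(1+\frac{a^2}{\lambda_2^2}\bigr)+xy=0$ cannot be absorbed by the normalization, yet the corollary as printed imposes no condition on $b,c,x,y$; since a coframe of that shape with $bc\bigl(1+\frac{a^2}{\lambda_2^2}\bigr)+xy\neq 0$ still satisfies $d^2=0$ but violates (\ref{eq:Yfinal}) (hence pluriclosedness), the ``if'' direction of the statement literally requires this residual constraint, and your version supplies the needed correction.
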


The $n=2$ case (when $\lambda >0$) corresponds to the Kodaira surfaces. The $n=3$ case was obtained by Angella, Otal, Ugarte, and Villacampa \cite{AOUV}. They examined all $t$-Gauduchon K\"ahler-like manifolds amongst all nilmanifolds and Calabi-Yau type solvmanifolds with $n=3$.

For the Chern K\"ahler-like condition, we have the following

\begin{theorem} \label{nilCKL}
Let $(G,J,g)$ be a Lie-Hermitian manifold, namely, $G$ is a Lie group equipped with a left invariant complex structure $J$ and a compatible left invariant metric $g$. If $G$ is a nilpotent, then $(G,J,g)$ is Chern K\"ahler-like if and only if it is Chern flat, namely, $(G,J)$ is a complex Lie group.
\end{theorem}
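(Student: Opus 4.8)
The plan is to work in a left-invariant unitary $(1,0)$-frame $\{e_1,\dots,e_n\}$, writing the complexified bracket as $[e_i,e_j]=\sum_k C^k_{ij}e_k$ (the inclusion in $\mathfrak g^{1,0}$ being integrability) and $[e_i,\bar e_j]=\sum_k D^k_{ij}e_k-\sum_k\overline{D^k_{ji}}\,\bar e_k$. A direct computation from $\nabla^c g=\nabla^c J=0$ and the vanishing of the $(1,1)$-part of the Chern torsion shows that the Chern connection sees only the mixed constants $D$:
\[
\nabla^c_{e_k}e_j=\sum_i\overline{D^{\,j}_{ik}}\,e_i,\qquad \nabla^c_{\bar e_k}e_j=-\sum_i D^{\,i}_{jk}\,e_i .
\]
In particular $D\equiv0$ is equivalent to $\nabla^c e_j\equiv0$, i.e.\ to the frame being Chern-parallel, which is exactly the statement that $(G,J)$ is a complex Lie group and that $\nabla^c$ is flat. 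Since a flat curvature trivially obeys all the K\"ahler symmetries, the content of the theorem is the implication \emph{Chern K\"ahler-like $\Rightarrow D\equiv0$}, and this is what I would prove.

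\textbf{An induction via a central $(1,0)$-direction.} I would argue by induction on $n=\dim_{\C}$. Using nilpotency, choose a nonzero $\xi$ in the largest $J$-invariant subspace of the centre of $\mathfrak g$, i.e.\ $\xi\in\mathfrak z\cap J\mathfrak z$; after rotating the frame we may take $\xi$ to be a multiple of $e_n$, so that $e_n$ is central in $\mathfrak g_{\C}$. Centrality, read off from $[e_n,\bar e_j]=0$ for all $j$, gives $D^{\,k}_{nj}=0$ and $D^{\,k}_{jn}=0$ for all $j,k$; that is, every $D$ with the index $n$ in a \emph{lower} slot vanishes. Feeding this into the connection formulas above yields $\nabla^c_{\bar e_j}e_n=0$ for all $j$ and $\nabla^c_{e_n}e_m=0$ for all $m$. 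Consequently all three terms in $R^c(e_n,\bar e_j)e_i$ collapse, and one gets $R^c_{n\bar j i\bar l}=0$ for all $i,j,l$, where $R^c_{i\bar jk\bar l}=\langle R^c(e_i,\bar e_j)e_k,\bar e_l\rangle$.

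\textbf{The K\"ahler-like identity and the contraction.} Applying the K\"ahler-like symmetry $R^c_{i\bar jk\bar l}=R^c_{k\bar ji\bar l}$ with $k=n$ then forces $R^c_{i\bar j n\bar l}=R^c_{n\bar j i\bar l}=0$. Writing out $R^c_{i\bar j n\bar l}$ from the curvature of the connection above, only the terms containing $D^n$ in an upper slot survive (the lower-slot ones already vanish), and the identity reads
\[
\sum_m \overline{D^{\,n}_{mi}}\,D^{\,l}_{mj}=\sum_m D^{\,m}_{ij}\,\overline{D^{\,n}_{lm}}\qquad(\forall\,i,j,l).
\]
Specialising $l=n$ kills the right-hand side, since it now contains $D^{\,n}_{nm}$, which has $n$ in a lower slot; this leaves $\sum_m\overline{D^{\,n}_{mi}}\,D^{\,n}_{mj}=0$, and taking $i=j$ together with positive-definiteness of $g$ gives $D^{\,n}_{mi}=0$ for all $m,i$. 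Hence \emph{every} $D$ carrying the index $n$ vanishes, so $R^c$ has no component involving $n$, and the remaining constants $\{D^{\,k}_{ij}:i,j,k\le n-1\}$ are exactly the Chern data of the quotient Lie--Hermitian manifold $\mathfrak g/\R\{\re\xi,\im\xi\}$, which is again nilpotent and Chern K\"ahler-like. The induction hypothesis then gives $D\equiv0$ on the quotient, which together with the vanishing just obtained yields $D\equiv0$ on $\mathfrak g$; the base case $n=1$ is abelian.

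\textbf{The main obstacle.} The delicate point is the very first step: the existence of a nonzero central direction in $\mathfrak z\cap J\mathfrak z$. This is automatic when $J$ is a \emph{nilpotent} complex structure in the sense of Cordero--Fern\'andez--Gray--Ugarte, but for a general invariant $J$ on a nilpotent $G$ the intersection $\mathfrak z\cap J\mathfrak z$ can be zero, so the induction cannot be started blindly. The crux is therefore to show that the Chern K\"ahler-like hypothesis is incompatible with such non-nilpotent $J$, or to produce the required central direction by other means. Here I would exploit the fact that on any nilmanifold the first Chern--Ricci form vanishes, since each $\mathrm{ad}$ is nilpotent hence trace-free, which forces $\sum_k D^{\,k}_{km}=0$; under the K\"ahler-like symmetry this coincides with the second Chern--Ricci form, and combining this global vanishing with the full symmetry of $R^c$ is what I expect to let one either locate the starting central direction or propagate the vanishing of $D$ directly. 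Essentially all the difficulty is concentrated in this step; once the frame is chosen, the curvature bookkeeping in the other steps is routine.
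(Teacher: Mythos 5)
Your reduction (Chern K\"ahler-like $\Rightarrow D\equiv 0$, and $D\equiv 0$ $\Leftrightarrow$ complex Lie group $\Leftrightarrow$ Chern flat) is the right target, and your inner step --- using centrality of $e_n$ to kill the curvature components $R^c_{n\bar j i\bar l}$, then the K\"ahler-like symmetry plus positive-definiteness to force $D^n_{\ast\ast}=0$ --- is sound \emph{given} the frame you assume. But the gap you flag at the end is not a technical loose end; it is fatal to the strategy as proposed. The existence of a nonzero $\xi\in\mathfrak z\cap J\mathfrak z$ is exactly the first step of the Cordero--Fern\'andez--Gray--Ugarte ascending series, i.e.\ it is (the start of) the hypothesis that $J$ is a \emph{nilpotent} complex structure --- and your induction needs it again at every quotient stage, so in total your argument proves the theorem only under ``$J$ nilpotent,'' which is strictly weaker than the stated hypothesis ``$G$ nilpotent'' (non-nilpotent invariant complex structures on nilpotent groups exist, as the paper itself emphasizes when it deliberately weakens Theorem~4, the Riemannian case, to the nilpotent-$J$ hypothesis). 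Your proposed rescue via the vanishing of the first Chern--Ricci form is not carried out, and as sketched it does not obviously give what you need: nilpotency of $\mathrm{ad}$ gives $\mathrm{tr}(\mathrm{ad}_{e_m})=0$ on $\mathfrak g^{\mathbb C}$, which mixes $C$- and $D$-type contributions rather than yielding $\sum_k D^k_{km}=0$ outright, and in any case a trace identity by itself does not locate a $J$-invariant central direction.

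The paper's proof gets around precisely this obstruction by substituting Salamon's theorem for the central direction: for \emph{any} invariant complex structure on a nilpotent group there is a unitary coframe with $d\varphi_i\in\mathcal I\{\varphi_1,\ldots,\varphi_{i-1}\}$, i.e.\ $C^j_{ik}=0$ unless $j>i$ or $j>k$, and $D^i_{jk}=0$ unless $j>i$. This is much weaker than centrality of $e_n$ (it gives only $D^n_{\ast\ast}=0$, not the vanishing of all lower-slot $D$'s with index $n$), so the curvature components involving $e_n$ do not collapse for free; instead the paper encodes Chern K\"ahler-like as $T^j_{ik;\bar\ell}=0$ (by \cite{YZ}), combines it with the Jacobi identity to get the quadratic identity (\ref{eq:D8}), and then runs the same two-stage index-$n$ argument you use: $j=k=n$ gives $D^{\ast}_{n\ast}=0$ by a norm computation, and $k=\ell=n$ shows the matrix $P=(D^j_{in})$ is \emph{normal}. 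The punchline replacing your positivity step is linear algebra: Salamon's filtration makes $P$ strictly lower triangular, hence nilpotent, and a normal nilpotent matrix is zero. Descending induction on the index then gives $D=0$ with no need to pass to quotients. So your skeleton (kill index $n$, descend) matches the paper's, but the missing idea in your proposal --- the device that works for arbitrary $J$ --- is Salamon's coframe together with the ``normal $+$ nilpotent $\Rightarrow 0$'' trick; without it, your proof establishes only the nilpotent-$J$ case.
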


The $n=3$ case of this result was proved by Angella, Otal, Ugarte and Villacampa \cite{AOUV}, where they examined all nilmanifolds and Calabi-Yau type solvmanifolds of the complex dimension $n=3$ and concluded that all Chern K\"ahler-like examples there are actually Chern flat.

Of course we do believe that there should be examples of compact Chern K\"ahler-like manifolds that are not Chern flat, perhaps even among (compact quotients of) Lie-Hermitian manifolds, but so far we do not know of any such example yet.

Similarly, for the Riemannian K\"ahler-like condition, we have the following negative statement

\begin{theorem} \label{nilRKL}
Let $(G,J,g)$ be a Lie-Hermitian manifold, namely, $G$ is a Lie group, $J$ a left invariant complex structure on $G$, and $g$ a left invariant metric on $G$ compatible with $J$. If $J$ is a nilpotent complex structure in the sense of Cordero, Fern\'{a}ndez, Gray and Ugarte \cite{CFGU}, then $(G,J,g)$ is Riemannian  K\"ahler-like if and only if $G$ is abelian.
\end{theorem}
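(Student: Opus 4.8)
The \emph{if} direction is immediate: if $\mathfrak g$ is abelian then $g$ is a flat K\"ahler metric, $\nabla=\nabla^c=\nabla^s$, and $R^{\nabla}\equiv 0$, which is trivially K\"ahler-like. So the entire content is the \emph{only if} direction, which I would prove by assuming $(G,J,g)$ is Riemannian K\"ahler-like with $J$ nilpotent and deducing that $\mathfrak g$ is abelian. The first step is to restate the K\"ahler-like conditions in complexified form. Extending $R^{\nabla}$ complex-linearly and feeding it type $(1,0)$ and $(0,1)$ vectors $e_i,\overline{e_j}$, the condition $R^{\nabla}(X,Y,JZ,JW)=R^{\nabla}(X,Y,Z,W)$ together with the pair-symmetry $R^{\nabla}(X,Y,Z,W)=R^{\nabla}(Z,W,X,Y)$ forces every component containing a pure pair (two $(1,0)$'s or two $(0,1)$'s among the first or the last two slots) to vanish, leaving only the mixed components $R^{\nabla}(e_i,\overline{e_j},e_k,\overline{e_l})$; the remaining condition $R^{\nabla}(X,Y,Z,W)=R^{\nabla}(Z,Y,X,W)$ then makes these symmetric under $i\leftrightarrow k$ (and, by conjugation, under $j\leftrightarrow l$). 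Thus Riemannian K\"ahler-like says exactly that the complexified $R^{\nabla}$ has the algebraic type of a genuine K\"ahler curvature tensor, and the part useful for us is the large collection of \emph{vanishing} equations for the non-K\"ahler components such as $R^{\nabla}(e_i,e_j,\,\cdot\,,\cdot)$ and $R^{\nabla}(e_i,\overline{e_j},e_k,e_l)$.

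Next I would use the hypothesis that $J$ is nilpotent in the sense of \cite{CFGU}, which furnishes a coframe of $(1,0)$-forms satisfying the flag property $d\varphi_i\in\Lambda^2\langle \varphi_1,\ldots,\varphi_{i-1},\overline{\varphi}_1,\ldots,\overline{\varphi}_{i-1}\rangle$, in particular $d\varphi_1=0$. Running a Gram--Schmidt process that respects the associated flag of subspaces, I can make this coframe \emph{unitary} for $g$ while retaining the same triangular flag property, and then write
\[ d\varphi_i=\tfrac12\sum_{j,k<i}C^{i}_{jk}\,\varphi_j\wedge\varphi_k+\sum_{j,k<i}D^{i}_{j\bar k}\,\varphi_j\wedge\overline{\varphi}_k, \]
with no $(0,2)$ part by integrability, the crucial feature being that all summation indices are strictly below $i$. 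Dualizing gives the brackets of $\mathfrak g$, and the goal reduces to showing that all $C^{i}_{jk}$ and $D^{i}_{j\bar k}$ vanish.

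I would then compute $R^{\nabla}$ in this unitary coframe. Since $\nabla$ does not preserve $J$, the cleanest route is to write $\nabla=\nabla^{s}-\tfrac12(\text{Strominger torsion form})$ (or $\nabla=\nabla^{c}+\gamma$), express the correction through the $C$'s and $D$'s, and assemble the components of $R^{\nabla}$ as explicit quadratic polynomials in $C,D$ via the Koszul formula. Substituting these into the vanishing equations of Step~1 yields a system on $C,D$, which I would analyze by induction along the flag: because each $d\varphi_i$ only involves indices $<i$ (so that, for example, the top directions are central and $\varphi_1$ is orthogonal to $[\mathfrak g,\mathfrak g]$), the curvature equations stratify and the vanishing of structure constants cascades layer by layer, forcing every bracket to be zero, i.e.\ $\mathfrak g$ abelian. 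As corroboration one may invoke Milnor's theorem that a nilpotent Lie group with a flat left-invariant metric is abelian, and the fact that an invariant K\"ahler structure on a nilpotent Lie algebra forces it to be abelian.

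The main obstacle I anticipate lies in Steps~3 and 4: unlike the Chern and Strominger connections treated in Theorems~\ref{nilBKL} and \ref{nilCKL}, the Levi-Civita connection is not Hermitian, so $R^{\nabla}$ has many more independent components and the K\"ahler-like equations couple the $(2,0)$-type constants $C$ with the $(1,1)$-type constants $D$ in a genuinely nonlinear way. The real work is to organize this coupled system so that the flag structure makes the induction close cleanly rather than leaving a residual subsystem; this bookkeeping, and the precise way the non-K\"ahler components are manufactured from the brackets, is where the difficulty concentrates. A secondary but necessary technical point is the verification, mentioned in Step~2, that the unitary coframe can be chosen compatibly with the nilpotent flag so that the triangular support $j,k<i$ survives.
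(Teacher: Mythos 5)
Your outline has the correct skeleton---the trivial ``if'' direction, the triangular unitary coframe coming from nilpotency of $J$ (the paper cites \cite[Theorem 12]{CFGU} for exactly the support condition $C^j_{ik}=D^i_{jk}=0$ unless $j>i,k$), and a descending induction along the flag---but it has a genuine gap at precisely the decisive step, which you yourself flag when you write that ``the real work is to organize this coupled system so that the induction closes.'' Your proposal stops where the proof begins. The quadratic system in $C,D$ obtained by setting the non-K\"ahler components of $R^{\nabla}$ to zero is indefinite in general, and the flag structure alone does not guarantee that each layer forces vanishing rather than admitting nontrivial solutions; some positivity mechanism is required, and you supply none. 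The paper's mechanism is this: rather than expanding all components of $R^{\nabla}$ by a Koszul-type formula (which produces a much larger second-order system), it uses the characterization from \cite[Lemma 5]{YZ} that Riemannian K\"ahler-like is equivalent to $\Theta_2=0$, and converts this into two first-order identities on the \emph{Chern torsion}, taken with respect to the $0$-Gauduchon (Lichnerowicz) connection $\nabla^0=\frac{1}{2}(\nabla^c+\nabla^s)$, namely $T^j_{ik,\ell}=\sum_r T^r_{ik}T^j_{r\ell}$ and the symmetry $T^j_{ik,\overline{\ell}}=T^{\ell}_{ik,\overline{j}}$. On a Lie group these are purely algebraic quadratic relations in $C,D$, recorded as \eqref{eq:TCbar}. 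The induction then closes by a specific index specialization: taking $k=\ell=n$ in \eqref{eq:TCbar} and using the triangular support yields $D_n^{\ast}D_n + C^n(C^n)^{\ast}=0$, a sum of two positive semidefinite Hermitian matrices, whence $C^n=D_n=0$; this kills every structure constant carrying the index $n$, and the argument repeats at level $n-1$, etc. Without an analogue of this trace/positivity trick your ``cascading'' claim is an unsupported hope, not a proof.

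Two secondary points. First, your proposed corroborations are inapplicable: Riemannian K\"ahler-like implies neither flatness of $g$ (so Milnor's theorem on flat left-invariant metrics does not apply) nor that $g$ is K\"ahler (so results on invariant K\"ahler structures on nilpotent groups do not apply either); indeed if K\"ahler-like implied K\"ahler the theorem would be nearly vacuous. Second, your Step 1 reduction of the symmetries is essentially right (for $\nabla$ the pair symmetry is automatic, so the condition amounts to the vanishing of the non-$(1,1)$ part of the curvature, which is what \cite{YZ} encode as $\Theta_2=0$), and your Gram--Schmidt remark in Step 2 is a legitimate, if routine, verification; but neither of these repairs the missing core, which is the identification of the particular contraction of the K\"ahler-like equations that produces a semidefinite identity at the top of the flag.
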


Note that when $J$ is nilpotent, the Lie group $G$ must be nilpotent as in \cite[Theorem 13]{CFGU}, but the converse is not true, namely, there are examples of nilpotent $G$ with non-nilpotent complex structure $J$. We do believe that the above statement should still be true under the broader assumption that $G$ is nilpotent, but at this point we do not know how to prove it, and thus we settle with the stronger assumption that $J$ is nilpotent.

%
%

\section{The Strominger K\"ahler-like case}\label{BKL}

Let us start with a Lie-Hermitian manifold $(G,J,g)$. We will follow the notations of \cite{VYZ}. Let $e$ be a unitary frame of left invariant  vector fields of type $(1,0)$ on $G$, with $\varphi$ the dual coframe. We will also denote by $e$ the corresponding unitary basis of ${\mathfrak g}^{1,0}$, the $(1,0)$-part of ${\mathfrak g}^{\mathbb C}$, which is the comlexification of the Lie algebra ${\mathfrak g}$ of $G$.

Extend the inner product  $g( ,)=\langle \, , \rangle$ linearly on ${\mathfrak g}^{\mathbb C}$, and denote by
\begin{equation} \label{str-const}
C_{ik}^j = \langle [e_i, e_k], \overline{e}_j \rangle , \ \ \  D_{ik}^j = \langle [\overline{e}_j, e_k], e_i \rangle
\end{equation}
the structure constants. Then the Chern torsion components and the Strominger connection coefficients are
\begin{eqnarray} \label{str-const}
& & 2T_{ik}^j = -C_{ik}^j -D_{ik}^j + D_{ki}^j , \\
& &   \Gamma_{ik}^j := \langle \nabla^s_{e_k}e_i ,\overline{e}_j \rangle = D_{ik}^j + 2T_{ik}^j = -C_{ik}^j + D_{ki}^j.
\end{eqnarray}
The covariant differentiation of the torsion with respect to $\nabla^s$ are given by
\begin{eqnarray}
& & T_{ik,\ell }^j = \sum_{p=1}^n (-T_{pk}^j \Gamma^p_{i\ell } - T_{ip}^j \Gamma^p_{k\ell } + T_{ik}^p \Gamma^j_{p\ell }), \label{eq:deri} \\
& &  T_{ik,\overline{\ell }}^j = \sum_{p=1}^n (T_{pk}^j \overline{ \Gamma^i_{p\ell } } + T_{ip}^j \overline{ \Gamma^k_{p\ell } } - T_{ik}^p \overline{ \Gamma^p_{j\ell } } ), \label{eq:deribar}
\end{eqnarray}
and the structure equation is given by
\begin{equation} \label{str}
d \varphi_j = - \frac{1}{2} \sum_{i,k=1}^n C^j_{ik} \varphi_i \wedge \varphi_k - \sum_{i,k=1}^n \overline{D^i_{jk}} \, \varphi_i \wedge \overline{\varphi}_k .
\end{equation}
The following result of Enrietti, Fino, and Vezzoni \cite{EFV} will be a crucial starting point for the proof of Theorem \ref{nilBKL} (see also \cite[Lemma 2.2]{FV}):

\begin{lemma}[\cite{EFV}]
If $G$ is nilpotent and $g$ is pluriclosed, then there exists a left invariant unitary coframe $\varphi$  and an integer $1\leq r\leq n$ such that
\begin{eqnarray} \label{strnil}
& & d\varphi_i = 0, \ \ \ 1\leq i \leq r ;  \\
& & d \varphi_{\alpha} = - \frac{1}{2} \sum_{i,k=1}^r C^{\alpha}_{ik} \varphi_i \wedge \varphi_k - \sum_{i,k=1}^r \overline{D^i_{\alpha k}} \, \varphi_i \wedge \overline{\varphi}_k, \ \ \ r\!+\!1 \leq \alpha \leq n.
\end{eqnarray}
\end{lemma}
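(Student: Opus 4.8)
The plan is to adapt the coframe to the space of closed $(1,0)$-forms and then to extract the stated support condition from the pluriclosed equation. First I would set $W=\{\varphi\in(\mathfrak g^{\ast})^{1,0}: d\varphi=0\}$, the space of left invariant closed $(1,0)$-forms, and put $r=\dim_{\mathbb C}W$. Since for a left invariant form $d\varphi(X,Y)=-\varphi([X,Y])$, a $(1,0)$-form lies in $W$ precisely when it annihilates $[\mathfrak g,\mathfrak g]^{\mathbb C}$. Because $\mathfrak g$ is nilpotent and $J$ is integrable, $W\neq 0$ (by Salamon's characterization a nilpotent Lie algebra with an invariant complex structure always admits a nonzero closed $(1,0)$-form), so $r\geq 1$. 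I would then choose a unitary basis $\varphi_1,\dots,\varphi_r$ of $W$ for the Hermitian inner product induced on $(\mathfrak g^{\ast})^{1,0}$ and complete it to a unitary coframe $\varphi_1,\dots,\varphi_n$; by construction $d\varphi_{j}=0$ for $1\leq j\leq r$, so by \eqref{str} we have $C^{j}_{ik}=0$ and $D^{i}_{jk}=0$ for all $i,k$ whenever $j\leq r$.

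The real content is the remaining assertion: for every $\alpha>r$ the form $d\varphi_{\alpha}$ is supported on the closed directions, i.e. $C^{\alpha}_{ik}=0$ and $\overline{D^{i}_{\alpha k}}=0$ unless both $i,k\leq r$, equivalently $d\varphi_{\alpha}\in\Lambda^{2}W\oplus(W\wedge\overline{W})$. Two inputs should combine to give this. From $d^{2}=0$, which is the Jacobi identity in disguise, together with nilpotency, one first gets a $J$-adapted flag: the non-closed $\varphi_{\alpha}$ can be ordered so that $d\varphi_{\alpha}$ only involves $\varphi_{\beta},\overline{\varphi}_{\beta}$ with $\beta<\alpha$. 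This is the familiar statement that a nilpotent $\mathfrak g$ carries a strictly filtered coframe, but by itself it still allows ``higher step'' terms such as $\varphi_{i}\wedge\overline{\varphi}_{\beta}$ with $\beta>r$, which are exactly the ones the conclusion must rule out; so the flag has to be collapsed to length two.

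The collapse is what the pluriclosed hypothesis should provide. I would take $\omega=\sqrt{-1}\sum_{j}\varphi_{j}\wedge\overline{\varphi}_{j}$, compute $\partial\overline{\partial}\omega$ from the structure equation \eqref{str}, and expand the result in the basis of invariant $(2,2)$-forms. The equation $\partial\overline{\partial}\omega=0$ then turns into a system of quadratic identities in the constants $C^{\alpha}_{ik}$ and $D^{i}_{\alpha k}$. After discarding the terms already known to be supported on $W$, I would aim to show that several of these identities combine into sums of squares of the mixed constants (those carrying a non-closed index), which are therefore forced to vanish; equivalently, a surviving term $\varphi_{i}\wedge\overline{\varphi}_{\beta}$ with $\beta>r$ in some $d\varphi_{\alpha}$ would, through these relations, manufacture a closed $(1,0)$-form outside $W$, contradicting the maximality $r=\dim_{\mathbb C}W$. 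Running this downward along the flag of the previous paragraph then clears all non-closed terms and produces the structure equations in the statement.

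I expect the main obstacle to be precisely this last step: arranging the expansion of $\partial\overline{\partial}\omega$ so that the mixed structure constants appear in a definite combination whose vanishing can be read off directly, rather than remaining entangled with the admissible $\Lambda^{2}W\oplus(W\wedge\overline{W})$ terms, and then making the downward induction along the nilpotent flag rigorous. The bookkeeping is delicate because $J$ is assumed neither abelian nor nilpotent, so both the $(2,0)$-constants $C$ and the $(1,1)$-constants $D$ are active and genuinely interact in the quadratic pluriclosed identity.
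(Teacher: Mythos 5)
Note first that the paper does not prove this lemma at all: it is quoted verbatim from Enrietti--Fino--Vezzoni \cite{EFV} (see also \cite[Lemma 2.2]{FV}), so the comparison must be with that source. Your opening reduction is fine (taking $r=\dim_{\mathbb C}W$, completing a unitary basis of $W$, and reading off from \eqref{str} that $C^j_{\ast\ast}=0$ and $D^{\ast}_{j\ast}=0$ for $j\leq r$), but everything after that is a plan rather than a proof, and the plan has a genuine gap exactly where you flag it. The entire content of the lemma is the ``collapse'' step, and the mechanism you propose for it --- that the components of $\partial\overline{\partial}\omega=0$ should ``combine into sums of squares of the mixed constants'' --- is not established and is not how the vanishing actually arises. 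Expanding $\partial\overline{\partial}\omega$ in structure constants produces quadratic relations mixing $C\overline{C}$, $D\overline{D}$ and $C\overline{D}$ terms with indefinite signs; for a sanity check, note that even in the vastly simpler diagonal situation the paper reaches later, the pluriclosed condition is the polarization identity \eqref{eq:Y}, an orthogonality relation with no definite sign, from which one extracts vanishing only through a further structured argument. Your fallback claim that a surviving term $\varphi_i\wedge\overline{\varphi}_{\beta}$, $\beta>r$, would ``manufacture a closed $(1,0)$-form outside $W$'' is asserted, not derived. In \cite{EFV} the corresponding work occupies several pages: one evaluates the SKT condition on well-chosen quadruples to show, using positivity of $g$, that the center satisfies $\mathfrak{z}\cap J\mathfrak{z}\neq 0$, quotients by such a $J$-invariant central subspace, and runs an induction on the nilpotency step, concluding eventually that $[\mathfrak{g},\mathfrak{g}]+J[\mathfrak{g},\mathfrak{g}]$ is central --- which is exactly what the normal form \eqref{strnil} encodes, since there the $e_{\alpha}$-directions span a $J$-invariant central subspace containing $[\mathfrak{g},\mathfrak{g}]$. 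None of this inductive structure is present in your sketch.

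A second, concrete error: the flag you invoke is stronger than what nilpotency of $G$ provides. Salamon's theorem, as recorded in the paper in \eqref{Salamon}, controls only the holomorphic indices: $D^i_{jk}=0$ unless $j>i$, with the \emph{conjugate} index $k$ unrestricted, so $d\varphi_{\alpha}$ may perfectly well contain $\varphi_i\wedge\overline{\varphi}_{\beta}$ with $\beta\geq\alpha$. The statement you use --- that $d\varphi_{\alpha}$ involves only $\varphi_{\beta},\overline{\varphi}_{\beta}$ with $\beta<\alpha$ --- is precisely the normal form \eqref{CDnil} characterizing \emph{nilpotent complex structures} in the sense of Cordero--Fern\'andez--Gray--Ugarte, and the paper explicitly warns that a nilpotent $G$ need not carry a nilpotent $J$ (this distinction is the very reason Theorem \ref{nilRKL} is stated under the stronger hypothesis). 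So your downward induction would start from a coframe you are not entitled to assume; ruling out the conjugate-index terms is part of what must be proved, not an input.
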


Throughout this section, we will use the index range convention that
$$ 1\leq i, j, k, \ldots  \leq r \ \ \ \  \mbox{and} \ \ \ \ r\!+\!1 \leq \alpha, \beta, \gamma , \ldots  \leq n. $$
The above lemma says that, when $G$ is nilpotent and $g$ is pluriclosed, the only possibly non-zero components of $C$ and $D$ are $C^{\alpha }_{ik}$ and $D^i_{\alpha k}$. From this, we know that the only possibly non-zero components of $T$ and $\Gamma$ are
$$ 2T^{\alpha}_{ik} = \Gamma^{\alpha }_{ik} = - C^{\alpha }_{ik}, \ \ \ 2T^i_{k\alpha } = \Gamma^i_{k\alpha } = D^i_{\alpha k}, $$
while $\Gamma^{i }_{\alpha k} = 0$. Plugging this into the derivative formula (\ref{eq:deribar}), we get the following

\begin{lemma} \label{abelian}
If $G$ is nilpotent, $g$ is pluriclosed, and $\nabla^sT=0$, then $C=0$, namely, the complex structure $J$ is abelian.
\end{lemma}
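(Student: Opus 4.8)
The plan is to squeeze a pure sum of squares in the components $C^{\gamma}_{ik}$ out of the single tensor identity $\nabla^sT=0$, by feeding a carefully chosen index pattern into the derivative formula (\ref{eq:deribar}). Recall that under the normal form above the only possibly nonzero structure and connection coefficients are $C^{\gamma}_{ik}$ and $D^i_{\gamma k}$ (Latin indices in $[1,r]$, Greek indices in $[r+1,n]$), with $2T^{\gamma}_{ik}=\Gamma^{\gamma}_{ik}=-C^{\gamma}_{ik}$ and $2T^i_{k\gamma}=\Gamma^i_{k\gamma}=D^i_{\gamma k}$, while crucially $\Gamma^i_{\gamma k}=0$. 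The first thing I would note is that the ``obvious'' choice of all lower indices Latin only produces mixed relations between $C$ and $\overline{D}$, from which $C=0$ cannot be read off directly; the decoupling comes instead from evaluating the component of $\nabla^sT=0$ in which the upper index and the \emph{first} lower index are both Greek and the remaining indices are Latin.

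Concretely, I would write out the $(j,i,k;\bar\ell)=(\gamma,\beta,k;\bar\ell)$ component of (\ref{eq:deribar}), where $\gamma,\beta\in[r+1,n]$ and $k,\ell\in[1,r]$:
\[
T^{\gamma}_{\beta k,\bar\ell}=\sum_{p}\Big(T^{\gamma}_{pk}\,\overline{\Gamma^{\beta}_{p\ell}}+T^{\gamma}_{\beta p}\,\overline{\Gamma^{k}_{p\ell}}-T^{p}_{\beta k}\,\overline{\Gamma^{p}_{\gamma\ell}}\Big).
\]
Then I would check that the last two sums vanish identically: $T^{\gamma}_{\beta p}=0$ because its upper index is Greek while its lower index $\beta$ is also Greek (a nonzero $T$ of the first type needs both lower indices Latin), and $\overline{\Gamma^{p}_{\gamma\ell}}=0$ because a $\Gamma$ with first lower index Greek and second lower index Latin is exactly the coefficient $\Gamma^i_{\gamma k}$, which is zero. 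In the first sum only $p\in[1,r]$ survives, where $T^{\gamma}_{pk}=-\tfrac12 C^{\gamma}_{pk}$ and $\Gamma^{\beta}_{p\ell}=-C^{\beta}_{p\ell}$, leaving
\[
T^{\gamma}_{\beta k,\bar\ell}=\tfrac12\sum_{p=1}^{r} C^{\gamma}_{pk}\,\overline{C^{\beta}_{p\ell}}.
\]

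Finally, I would invoke the hypothesis $\nabla^sT=0$, forcing the left-hand side to vanish for every choice of indices, and specialize to $\beta=\gamma$, $\ell=k$ to obtain $\sum_{p=1}^{r}|C^{\gamma}_{pk}|^2=0$ for all $\gamma\in[r+1,n]$ and $k\in[1,r]$. Hence every $C^{\gamma}_{pk}=0$, and since these are the only possibly nonzero components of $C$, we conclude $C=0$, i.e. $J$ is abelian. The one genuinely delicate point, which I expect to be the crux, is the choice of index pattern: differentiating the \emph{identically vanishing} component $T^{\gamma}_{\beta k}$ rather than an a priori nonzero torsion component is precisely what kills the $D$-contributions and converts $\nabla^sT=0$ into a sum of squares. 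Everything else is routine bookkeeping of which coefficients the normal form forces to be zero.
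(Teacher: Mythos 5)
Your proof is correct and takes essentially the same route as the paper: the paper's proof evaluates exactly the component $T^{\alpha}_{\alpha k,\overline{k}}$ (your choice $(j,i,k;\bar\ell)=(\gamma,\beta,k;\bar\ell)$ specialized at once to $\beta=\gamma$, $\ell=k$), using $T^{\alpha}_{\alpha k}=0$ and $\Gamma^{\ast}_{\alpha k}=0$ to kill the $D$-contributions and arrive at the same sum of squares $0=\tfrac12\sum_{p}|C^{\alpha}_{pk}|^{2}$. Your identification of the key point --- differentiating the identically vanishing torsion component with upper and first lower index Greek --- is precisely the mechanism of the paper's argument.
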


\begin{proof}
Since $T^{\alpha}_{\alpha k}=0$ and $\Gamma^{\ast}_{\alpha k}=0$, by the derivative formula (\ref{eq:deribar}), we get
$$ 0 =  T^{\alpha }_{\alpha k, \overline{k} } = \sum_p T^{\alpha}_{pk}  \overline{ \Gamma^{\alpha}_{pk}  } = \frac{1}{2} \sum_p | C^{\alpha}_{pk} |^2 $$
for any $\alpha $ and $k$, Therefore $C=0$.
\end{proof}

By this lemma we have $C=0$, so now the only possibly non-zero components of $T$ and $\Gamma$ are
$$ 2T^i_{k\alpha } = \Gamma^i_{k\alpha } = D^i_{\alpha k}.$$
From the derivative formula (\ref{eq:deri}) and (\ref{eq:deribar}), it is easy to establish the following

\begin{lemma} \label{Dcommute}
If $G$ is nilpotent and $g$ is pluriclosed, then $\nabla^sT=0$ if and only if $C=0$ and
$$ D_{\alpha}D_{\beta} = D_{\beta}D_{\alpha}, \ \ D_{\alpha}^{\ast }D_{\beta} = D_{\beta}D_{\alpha}^{\ast} $$
for any $\alpha$, $\beta$.
\end{lemma}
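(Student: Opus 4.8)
The plan is to reduce everything to the case $C=0$ and then read off the two derivative formulas $(\ref{eq:deri})$ and $(\ref{eq:deribar})$ as encoding two matrix commutators. For the forward implication, if $\nabla^sT=0$ then Lemma $\ref{abelian}$ already forces $C=0$; for the converse, $C=0$ is part of the hypothesis. So in both directions we may work under $C=0$, where, as noted just above, the only nonzero torsion and connection coefficients are $2T^i_{k\alpha}=\Gamma^i_{k\alpha}=D^i_{\alpha k}$ (together with the antisymmetrization $T^i_{\alpha k}=-T^i_{k\alpha}$), while $\Gamma^i_{\alpha k}=0$ and $\Gamma^{\alpha}_{\ast\ast}=0$. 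It is convenient to package these into $r\times r$ matrices $D_\alpha$ with entries $(D_\alpha)_{ik}=D^i_{\alpha k}$, so that $\Gamma^i_{k\alpha}=(D_\alpha)_{ik}$ and the adjoint $D_\alpha^{\ast}$ has entries $(D_\alpha^{\ast})_{ik}=\overline{D^k_{\alpha i}}$.

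First I would record the shape of the nonzero $\Gamma$: its upper and middle indices are flat (in $\{1,\dots,r\}$) and its differentiation direction is non-flat (in $\{r\!+\!1,\dots,n\}$). Feeding this into $(\ref{eq:deri})$ and $(\ref{eq:deribar})$ and tracking index types shows that a component $T^a_{bc,\ell}$ or $T^a_{bc,\overline{\ell}}$ can be nonzero only when $a$ is flat, the differentiation index is non-flat, and exactly one of $b,c$ is flat. Thus the only components of $\nabla^sT$ that need to be examined are $T^i_{k\alpha,\beta}$ and $T^i_{k\alpha,\overline{\beta}}$, together with their antisymmetric partners $T^i_{\alpha k,\cdot}=-T^i_{k\alpha,\cdot}$ and the conjugate components fixed by reality; every other component vanishes identically once $C=0$. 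Establishing this completeness statement carefully is the step I expect to be the main bookkeeping hurdle, since one must be sure no surviving component is overlooked. The crucial cancellations are driven by $\Gamma^{p}_{\alpha\beta}=0$ and $\Gamma^{\alpha}_{p\beta}=0$, which kill the ``cross'' terms in each formula.

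Then I would simply evaluate the two surviving families. In $(\ref{eq:deri})$ the middle term drops because $\Gamma^{p}_{\alpha\beta}=0$, and the remaining two terms assemble into $T^i_{k\alpha,\beta}=\tfrac12\big(D_\beta D_\alpha-D_\alpha D_\beta\big)_{ik}$; in $(\ref{eq:deribar})$ the middle term drops because $\Gamma^{\alpha}_{p\beta}=0$, leaving $T^i_{k\alpha,\overline{\beta}}=\tfrac12\big(D_\alpha D_\beta^{\ast}-D_\beta^{\ast}D_\alpha\big)_{ik}$. Hence, under $C=0$, the condition $\nabla^sT=0$ is equivalent to $[D_\alpha,D_\beta]=0$ and $[D_\alpha,D_\beta^{\ast}]=0$ for all $\alpha,\beta$, that is, to $D_\alpha D_\beta=D_\beta D_\alpha$ together with $D_\alpha D_\beta^{\ast}=D_\beta^{\ast}D_\alpha$, the latter being, after taking adjoints and relabeling, exactly the relation $D_\alpha^{\ast}D_\beta=D_\beta D_\alpha^{\ast}$ in the statement. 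Combining this equivalence with Lemma $\ref{abelian}$ for the forward direction completes the proof.
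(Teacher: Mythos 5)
Your proposal is correct and is exactly the computation the paper intends: the paper states Lemma \ref{Dcommute} as an easy consequence of the derivative formulas (\ref{eq:deri}) and (\ref{eq:deribar}) without writing out details, and your argument (reduce to $C=0$ via Lemma \ref{abelian}, check that the only possibly nonzero derivative components are $T^i_{k\alpha,\beta}$ and $T^i_{k\alpha,\overline{\beta}}$ since $\Gamma^{p}_{\alpha\beta}=\Gamma^{\alpha}_{p\beta}=0$, and identify these with $\tfrac12[D_\beta,D_\alpha]$ and $\tfrac12[D_\alpha,D_\beta^{\ast}]$) fills in precisely those details, with your transposed matrix convention being immaterial since the commutation conditions are transpose-invariant.
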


Here we denoted by $D_{\alpha}$ the $r\times r$ matrix, whose $(i,j)$-th entry is $D^j_{\alpha i}$. The above lemma says that these $D_{\alpha }$ form a set of commuting normal matrices, hence they can be simultaneously diagonalized by unitary matrices, namely, there exists a unitary matrix $P\in U(r)$ such that $UD_{\alpha}U^{\ast }$ is diagonal for each $\alpha$.

\begin{proof}[\textbf{Proof of Theorem \ref{nilBKL}}] Let $(G,J,g)$ be a Lie-Hermitian manifold that is Strominger K\"ahler-like and assume that $G$ is nilpotent. Then by \cite[Corollary 4]{ZZ}, we know that $g$ is pluriclosed and $\nabla^sT=0$. Hence the above lemmata imply that there exists a unitary left invariant coframe $\varphi$ and an integer $1\leq r \leq n$ such that
\begin{eqnarray}
& & d\varphi_i = 0, \ \ \ 1\leq i \leq r ;  \\
& & d \varphi_{\alpha} =  \sum_{i=1}^r Y_{i\alpha}  \varphi_i \wedge \overline{\varphi}_i, \ \ \ r\!+\!1 \leq \alpha \leq n.
\end{eqnarray}
Note that here we have performed a unitary change on $\{ \varphi_1, \ldots , \varphi_r\}$ if necessary, to ensure that all $D_{\alpha}$ are diagonal: $D^k_{\alpha i} = - \overline{Y_{i\alpha } }\,\delta_{ik}$.

Let us denote by $\xi_{\alpha}$ the column vector with entries $Y_{i\alpha }$, $1\leq i \leq r$. By performing a unitary change of $\{ \varphi_{r+1}, \ldots , \varphi_n\}$ if necessary, we may assume that the collection $\{ \xi_{\tilde{r}+1}, \ldots , \xi_n\}$ is linearly independent for some integer $\tilde{r}$ possibly larger than $r$ and $\xi_{\alpha}=0$ for $r+1 \leq \alpha \leq \tilde{r}$. This means that the $r\times (n-\tilde{r})$ matrix $(Y_{i\alpha})$ has rank $n-\tilde{r}$. Note that now $\tilde{r}$ stands for the complex dimension of the space of all $d$-closed left invariant $(1,0)$-forms on $G$ and the structure equation amounts to
\begin{equation}
 d\varphi_i = 0, \ \ 1\leq i\leq \tilde{r}; \ \ \ \ \ d\varphi_{\alpha} = \sum_{i=1}^r Y_{i\alpha} \varphi_i \overline{\varphi}_i, \ \ \tilde{r}\!+\!1\leq \alpha \leq n.
\end{equation}

Since $\partial \varphi_{\alpha }=0$ for $ \tilde{r}+1 \leq \alpha \leq n$, we get $\overline{\partial}(\varphi_{\alpha}\overline{\varphi}_{\alpha}) = \overline{\partial}\varphi_{\alpha} \wedge \overline{\varphi}_{\alpha}$ and
\begin{eqnarray*}
\partial \overline{\partial} (\varphi_{\alpha} \overline{\varphi}_{\alpha} ) & = & \partial (\overline{\partial} \varphi_{\alpha} \wedge \overline{\varphi}_{\alpha}) \ = \ \overline{\partial} \varphi_{\alpha} \wedge \partial \overline{\varphi}_{\alpha} \\
& = &  - \sum_{i,k=1}^r Y_{i\alpha} \overline{ Y_{k\alpha }} \, \varphi_i\, \overline{\varphi}_i \, \varphi_k \,  \overline{\varphi}_k.
\end{eqnarray*}
Therefore, the pluriclosed condition $\partial \overline{\partial}  \omega_g =0$ is equivalent to
\begin{equation} \label{eq:Y}
\sum_{\alpha =\tilde{r}+1}^n ( Y_{i\alpha} \overline{ Y_{k\alpha} }
+ Y_{k\alpha} \overline{ Y_{i\alpha} } ) = 0, \ \ \ \forall \ 1\leq i \neq  k\leq r.
\end{equation}
Denote by $y_i$ the vector in ${\mathbb C}^{n-\tilde{r}}$ whose entries are $Y_{i\alpha}$. We have
$$ \langle y_i , \overline{y_k} \rangle + \langle y_k , \overline{y_i} \rangle =0 , \ \ \ \forall \ 1\leq i \neq  k\leq r. $$
Let us write $Y_{i\alpha} = U_{i\alpha} + \sqrt{-1} V_{i\alpha}$ for the real and imaginary parts, and write $u_i= (U_{i\alpha})$, $v_i = (V_{i\alpha})$ for the vectors in ${\mathbb R}^{n-\tilde{r}}$. It follows that $y_i=u_i+\sqrt{-1}v_i$. Let us also denote by $x_i = (- v_i, u_i)$ the vector in ${\mathbb R}^{2(n-\tilde{r})}$. The above condition on $y_i$ when translated in terms of $x_i$ becomes
$$ \langle x_i, x_k \rangle =0 , \ \ \ \forall \ 1\leq i \neq  k\leq r,  $$
that is, the vectors $\{ x_1, \ldots , x_r\} $ in ${\mathbb R}^{2(n-\tilde{r})}$ are pairwisely perpendicular to each other. Let $s$ be the number of non-zero $x_i$, and by a permutation if necessary, we may assume that $x_i\neq 0$ for each $1\leq i \leq s$, while $x_i=0$ for each $s+1\leq i\leq r$. Clearly, $s\leq 2(n-\tilde{r})$ and we also know that $s\geq n-\tilde{r}$ as there are only $s$ rows in $(Y_{i\alpha})$ that are non-zero, whereas the matrix has rank $n-\tilde{r}$. Hence we know that $s$ is in the range
$$ n\!-\!\tilde{r} \leq s\leq \min \{ r, 2(n\!-\!\tilde{r})\} \leq \min \{ \tilde{r}, 2(n\!-\!\tilde{r})\} ,$$
and $\{ x_1, \ldots , x_s\}$ form an orthogonal basis for a $s$-dimensional subspace of ${\mathbb R}^{2(n-\tilde{r})}$.

Then let us express things in terms of vector fields. Let $e$ be the left invariant unitary frame dual to $\varphi$. Under the frame $\{ e_a, \overline{e}_a\}_{a=1}^{2n}$ of ${\mathfrak g}^{\mathbb C}$, the only non-trivial Lie brackets are
 $$ [\overline{e}_i, e_i ] = \sum_{\alpha=\tilde{r}+1}^n (Y_{i\alpha} e_{\alpha} - \overline{ Y_{i\alpha} } \,\overline{e}_{\alpha}), \ \ \ \ \ 1\leq i\leq r. $$
When we write $e_a = \frac{1}{\sqrt{2}} (\eps_a - \sqrt{-1} \eps_{n+a})$, it yields that, under the basis $\{ \eps_a, \eps_{n+a}\} _{a=1}^n$ of ${\mathfrak g}$, the only non-zero brackets are
$$ [\eps_i, \eps_{n+i} ] = \sqrt{-1} \,[\overline{e}_i, e_i ] =  \sqrt{2} \sum_{\alpha =\tilde{r}+1}^n ( U_{i\alpha} \eps_{n+\alpha} - V_{i\alpha} \eps_{\alpha} ) = \sqrt{2} \,x_i, \ \ \ \ 1\leq i\leq s, $$
since $x_i=0$ for $s+1\leq i\leq r$. If we normalize $\sqrt{2}x_i = \lambda_i X_i$ with $\lambda_i>0$ and $|X_i|=1$, then $\{ X_1, \ldots , X_s\}$ form an orthonormal basis of ${\mathfrak n} = [{\mathfrak g}, {\mathfrak g} ]$.

Conversely, if a Lie-Hermitian manifold $(G,J,g)$ satisfies the description in Theorem \ref{nilBKL}. It is easy to see that it is Strominger K\"ahler-like. This completes the proof of Theorem \ref{nilBKL}.
\end{proof}

As to the proof of Corollary \ref{nilBKL_lowd}, since it is an easy consequence of the condition (\ref{eq:Yfinal}) and the above considerations, therefore we omit it here.

\section{The Chern and Riemannian K\"ahler-like cases}\label{BKL}

In this section, we would like to prove Theorem \ref{nilCKL} and \ref{nilRKL} stated in the introduction. Let $(G,J,g)$ be a Lie-Hermitian manifold and $e$ be a left invariant unitary frame with dual coframe $\varphi$ as before. The coefficients of the Chern connection $\nabla^c$ under  $e$ are given by
\begin{equation}
\hat{\Gamma}_{ik}^{j} := \langle \nabla^c_{e_k}e_i ,\overline{e}_j \rangle = D_{ik}^j ,
\end{equation}
hence the covariant derivatives of the torsion in $\nabla^c$ become
\begin{eqnarray}
& & T_{ik;\ell }^j = \sum_{r=1}^n (-T_{rk}^j D^r_{i\ell } - T_{ir}^j D^r_{k\ell } + T_{ik}^r D^j_{r\ell }), \label{eq:cderi} \\
& &  T_{ik;\overline{\ell }}^j = \sum_{r=1}^n (T_{rk}^j \overline{ D^i_{r\ell } } + T_{ir}^j \overline{ D^k_{r\ell } } - T_{ik}^r \overline{ D^r_{j\ell } } ). \label{eq:cderibar}
\end{eqnarray}

The Jacobi identity, or equivalently, the exterior differentiation of the structure equation (\ref{str}), gives the following identities for the structure constants $C$ and $D$ (cf. \cite[Lemma 2.1]{VYZ}):
\begin{eqnarray}
&& \sum_{r=1}^n \big( C^r_{ij}C^{\ell}_{rk} + C^r_{jk}C^{\ell}_{ri} + C^r_{ki}C^{\ell}_{rj} \big) \ =\ 0, \\
 && \sum_{r=1}^n \big(  C^r_{ik} D^{\ell}_{j r} + D^r_{ji} D^{\ell}_{rk} - D^r_{jk} D^{\ell}_{ri}   \big) \ =\ 0, \\
 && \sum_{r=1}^n \big(   C^r_{ik} \overline{D^r_{j\ell }} - C^{j}_{rk} \overline{D^i_{r\ell}} + C^{j}_{ri} \overline{D^k_{r \ell}} - D^{\ell}_{ri} \overline{D^k_{j r}} + D^{\ell}_{rk} \overline{D^i_{j r}}  \big) \ =\ 0 \label{eq:CDbar}
\end{eqnarray}
for any indices $i$, $j$, $k$, and $\ell$. Since $2T_{ik}^j = - C_{ik}^j - D_{ik}^j  + D_{ki}^j$, by (\ref{eq:cderibar}) and (\ref{eq:CDbar}), we get
\begin{eqnarray}
2T_{ik;\overline{\ell }}^j & = &  -\big(  C_{rk}^j + D_{rk}^j  - D_{kr}^j \big) \overline{ D^i_{r\ell } } - \big( C_{ir}^j + D_{ir}^j  - D_{ri}^j \big) \overline{ D^k_{r\ell } } + \big(  C_{ik}^r + D_{ik}^r  - D_{ki}^r \big)  \overline{ D^r_{j\ell } } \nonumber \\
& = & -\big(D_{rk}^j - D_{kr}^j \big) \overline{ D^i_{r\ell } } - \big(D_{ir}^j - D_{ri}^j\big) \overline{ D^k_{r\ell } } + \big(  D_{ik}^r  - D_{ki}^r \big)  \overline{ D^r_{j\ell } } + \big( D^{\ell }_{ri} \overline{ D^k_{jr} }  -  D^{\ell }_{rk} \overline{ D^i_{jr} }  \big). \label{eq:D8}
\end{eqnarray}

\begin{proof}[\textbf{Proof of Theorem \ref{nilCKL}}] Suppose that $(G,J,g)$ is  a Lie-Hermitian manifold, and assume that $G$ is a nilpotent. By the famous result of Salamon \cite[Theorem 1.3]{Salamon}, there will be a coframe $\varphi$ of left invariant $(1,0)$-forms on $G$ such that
$$ d\varphi_1 =0, \ \ \ d\varphi_i = {\mathcal I} \{\varphi_1, \ldots , \varphi_{i-1}\} , \ \ \ \forall \ 2\leq i\leq n, $$
where ${\mathcal I}$ stands for the ideal in  exterior algebra of the complexified cotangent bundle generated by those $(1,0)$-forms. Clearly, one can assume that $\varphi$ is also unitary. In terms of the structure constants $C$ and $D$, this means that
\begin{equation}
C^j_{ik}=0  \ \ \ \mbox{unless} \ \ j>i \ \mbox{or} \ j>k; \ \ \ \ \ D^i_{jk}=0  \ \ \ \mbox{unless} \ \ j>i.  \label{Salamon}
\end{equation}
Assume that $g$ is Chern K\"ahler-like. By \cite{YZ} we know that this is equivalent to the condition that $T_{ik;\overline{\ell }}^j=0$ for any indices $i$, $j$, $k$, and $\ell$. Hence the sum of the terms in the line (\ref{eq:D8}) vanishes. If we take $j=k=n$ there, since $D^n_{\ast \ast }=0$, it yields that
$$  \sum_r \big\{ \big( D^r_{in} - D^r_{ni}\big) \overline{ D^r_{n\ell} } - D^{\ell}_{rn}  \overline{ D^i_{nr} } \big\} =0 $$
for any $i$, $\ell$. Taking $i=\ell$ and summing over, we get
$$ \sum_{i,r} |D^r_{ni}|^2 = \sum_{i,r} D^r_{in} \overline{D^r_{ni} } - \sum_{i,r} D^i_{rn} \overline{D^i_{nr} } =0.$$
Therefore we conclude that $D^{\ast}_{n\ast}=0$. To see that $D^{\ast}_{\ast n}=0$, let us take $k=\ell =n$ in (\ref{eq:D8}) and use the fact that $D^n_{\ast\ast} =D^{\ast}_{n\ast}=0$, which yields that
$$ \sum_r \big\{ - D^j_{rn} \overline{ D^i_{rn} } + D^r_{in} \overline{ D^r_{jn} } \big\} =0. $$
If we write $P=(P_{ij}) = (D^j_{in})$, the above equation simply says that $P^{\ast}P = P P^{\ast}$, that is, $P$ is normal hence diagonalizable. But by (\ref{Salamon}), $P$ is strictly lower triangular, hence nilpotent and all of its eigenvalues are zero. This means that $P=0$, so we have shown that $D$ vanishes whenever any of the indices is $n$. Now repeat the argument by taking $j=k=n-1$ in (\ref{eq:D8}) etc., from which we get $D$ vanishes whenever any of its indices is $n-1$. Keep on going with this process and we see that $D=0$ for all indices. Therefore $(G,J)$ is a complex Lie group and $g$ is Chern flat. This concludes the proof of Theorem \ref{nilCKL}.
\end{proof}


Let us now turn our attention to the Riemannian K\"ahler-like case. In this case the easiest covariant derivatives to use are those with respect to the so-called $0$-Gauduchon connection $\nabla^0$, defined by
$$ \nabla^0_XY = \frac{1}{2}(\nabla_XY - J\nabla_XJY)$$
for any tangent vector fields $X$ and $Y$. It is just the Hermitian projection of the Riemannian (Levi-Civita) connection $\nabla$, and is also called the Lichnerowicz connection in some literature. It is easy to see that $\nabla^0 = \frac{1}{2}(\nabla^c + \nabla^s)$.  In the remainder of this section, we will denote by $T^j_{ik,\ell}$ and $T^j_{ik, \overline{\ell}}$ the covariant derivatives of the Chern torsion under the connection $\nabla^0$. First we have the following

\begin{lemma}
If a Hermitian manifold $(M^n,g)$ is Riemannian K\"ahler-like, then under any local unitary frame $e$ it holds
\begin{eqnarray}
T^j_{ik,\ell} & = & \sum_{r=1}^n T^r_{ik} T^j_{r\ell}, \\
T^j_{ik, \overline{\ell}} & = & T^{\ell }_{ik, \overline{j}}, \label{eq:Tbar}
\end{eqnarray}
for any indices, where the indices after comma stands for covariant derivatives with respect to the $0$-Gauduchon connection $\nabla^0$.
\end{lemma}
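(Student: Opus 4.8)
The plan is to measure the failure of the Kähler condition through the difference tensor $\gamma=\nabla-\nabla^0$ and to read the Riemannian Kähler-like hypothesis off the curvature of $\nabla$ expressed relative to the Hermitian connection $\nabla^0=\frac12(\nabla^c+\nabla^s)$. First I would record that $\gamma(X,Y)=\frac12 J(\nabla_X J)Y$, the projection of $\nabla_XY$ onto the ``wrong'' type. Concretely, in a local unitary frame $\{e_i,\overline e_j\}$ one has $\gamma(e_k,e_i)=0=\gamma(\overline e_k,\overline e_i)$ by integrability of $J$, while the mixed pieces $\gamma(\overline e_k,e_i)$ and $\gamma(e_k,\overline e_i)$ are explicit linear expressions in the Chern torsion components $T^j_{ik}$ and their conjugates. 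In particular $\langle\gamma(\overline e_j,e_i),e_k\rangle$ is a constant multiple of $T^j_{ik}$, so the $\nabla^0$-covariant derivatives of $\gamma$ carry exactly the quantities $T^j_{ik,\ell}$ and $T^j_{ik,\overline\ell}$ in the statement.

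Next I would invoke the standard curvature comparison identity for $\nabla=\nabla^0+\gamma$,
\[
R^\nabla(X,Y)Z = R^0(X,Y)Z + (\nabla^0_X\gamma)(Y,Z) - (\nabla^0_Y\gamma)(X,Z) + [\gamma_X,\gamma_Y]Z + \gamma(\mathcal{T}^0(X,Y),Z),
\]
where $R^0$ and $\mathcal{T}^0$ are the curvature and torsion of $\nabla^0$. The point of choosing $\nabla^0$ as reference is that its linear term produces the covariant derivatives of $T$, while $[\gamma,\gamma]$ and $\gamma(\mathcal{T}^0(\cdot,\cdot),\cdot)$ are quadratic in $T$; moreover, since $\nabla^0$ is Hermitian, $R^0$ is $J$-invariant in each argument pair, so any $R^0$-component whose relevant pair is of pure type $(2,0)$ or $(0,2)$ vanishes for free. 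On the hypothesis side, I would note that condition $(b)$ together with the Riemannian pair symmetry $R^\nabla(X,Y,Z,W)=R^\nabla(Z,W,X,Y)$ upgrades to full $J$-invariance $R^\nabla(JX,JY,Z,W)=R^\nabla(X,Y,Z,W)$; hence every $R^\nabla$-component with a $(2,0)$ or $(0,2)$ pair vanishes, and condition $(a)$ then imposes the Kähler symmetry on the surviving Hermitian-type components.

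The first identity I would obtain by evaluating the vanishing component $\langle R^\nabla(e_\ell,\overline e_j)e_i,e_k\rangle=0$, whose last pair $(e_i,e_k)$ is of type $(2,0)$. In the comparison formula the $R^0$-term drops automatically, the term $(\nabla^0_{e_\ell}\gamma)(\overline e_j,e_i)$ reproduces a multiple of the holomorphic derivative $T^j_{ik,\ell}$, while $(\nabla^0_{\overline e_j}\gamma)(e_\ell,e_i)$ and $[\gamma_{e_\ell},\gamma_{\overline e_j}]e_i$ drop by type reasons (they factor through $\gamma$ on a holomorphic–holomorphic pair, or land in the wrong type for the pairing with $e_k$). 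The single surviving quadratic term $\gamma(\mathcal{T}^0(e_\ell,\overline e_j),e_i)$ is evaluated using that the Strominger torsion is totally skew and equals $-2$ times the Chern torsion on $(2,0)$-vectors, which turns it into exactly $\sum_r T^r_{ik}T^j_{r\ell}$; assembling these terms gives $T^j_{ik,\ell}=\sum_r T^r_{ik}T^j_{r\ell}$.

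For the second identity I would start from the conjugate-type vanishing component $\langle R^\nabla(\overline e_\ell,\overline e_j)e_i,e_k\rangle=0$, whose first pair is of type $(0,2)$: the two linear terms now produce $T^j_{ik,\overline\ell}$ and $T^\ell_{ik,\overline j}$ with opposite signs, so $J$-invariance alone only yields $T^j_{ik,\overline\ell}-T^\ell_{ik,\overline j}$ equal to a quadratic torsion expression coming from $\gamma(\mathcal{T}^0(\overline e_\ell,\overline e_j),e_i)$. Unlike the holomorphic case, this leftover is of mixed $\overline T\,T$-type and does \emph{not} vanish by $J$-invariance; the plan is to remove it by bringing in the first–third symmetry $(a)$ and the first Bianchi identity for $\nabla$, applied to the associated Hermitian-type components $R^\nabla(e_i,\overline e_j,\overline e_\ell,e_k)$, in which $R^0$ no longer drops, so that the Kähler symmetry forces the surviving $R^0$ and quadratic torsion contributions to cancel and leaves $T^j_{ik,\overline\ell}=T^\ell_{ik,\overline j}$. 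I expect this cancellation to be the main obstacle: it requires computing $\gamma$ and the torsion $\mathcal{T}^0$ of the reference connection explicitly, tracking the several mixed quadratic-in-$T$ terms, and using the full strength of both Kähler-like conditions (not merely the resulting $J$-invariance) to eliminate the curvature term $R^0$. A secondary bookkeeping point is to check that the $\nabla^0$-derivatives of $\gamma$ reproduce $T^j_{ik,\ell}$ and $T^j_{ik,\overline\ell}$ with the exact index conventions of the statement, using repeatedly that the Chern torsion has no $(1,1)$-part.
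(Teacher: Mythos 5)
Your plan is, in substance, the paper's own proof written in covariant language. The difference tensor $\gamma=\nabla-\nabla^0$ you introduce is exactly the off-diagonal block $\overline{\theta}_2$ of the Levi-Civita connection matrix in the notation of \cite{YZ} ($\nabla e=\theta_1e+\overline{\theta}_2\,\overline{e}$, with $(\theta_2)_{ij}=\overline{T^k_{ij}}\,\varphi_k$), and the two vanishing curvature components you evaluate, $R^\nabla(e_\ell,\overline{e}_j,e_i,e_k)=0$ and $R^\nabla(\overline{e}_\ell,\overline{e}_j,e_i,e_k)=0$, are precisely the $(1,1)$- and $(2,0)$-parts of the equation $\Theta_2=0$, which is how the paper encodes the Riemannian K\"ahler-like hypothesis (via \cite[Lemma 5]{YZ}); the paper then obtains both identities simultaneously by computing $d\theta_2$ in a frame normalized so that $\theta_1(x)=0$. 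Your derivation of the first identity is correct, up to one cosmetic point: what enters there is the mixed torsion $\mathcal{T}^0(e_\ell,\overline{e}_j)=\gamma_{\overline{e}_j}e_\ell-\gamma_{e_\ell}\overline{e}_j=\sum_p T^j_{\ell p}\,\overline{e}_p-\sum_q\overline{T^\ell_{jq}}\,e_q$ (half the mixed Strominger torsion, since the Chern torsion has no $(1,1)$-part), not the value of the Strominger torsion on $(2,0)$-vectors; only its $(0,1)$-part feeds into $\gamma$ and it produces the quadratic term $\sum_p T^p_{ik}T^j_{p\ell}$ as you predicted.

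The ``main obstacle'' you anticipate for the second identity, however, is illusory, and your proposed detour through the first Bianchi identity and the first--third symmetry rests on a miscomputation. Since $\theta_2$ is a matrix of $(1,0)$-forms, $\gamma_Xe_i=\sum_j T^k_{ij}\,\overline{\varphi}_k(X)\,\overline{e}_j$ depends only on $X^{0,1}$ and takes values in $T^{0,1}$, while by conjugation $\gamma_X\overline{e}_i$ depends only on $X^{1,0}$. Consequently $\gamma_{\overline{e}_j}\overline{e}_\ell=0$, and since $\nabla$ is torsion-free one has $\mathcal{T}^0(X,Y)=\gamma_YX-\gamma_XY$, whence $\mathcal{T}^0(\overline{e}_\ell,\overline{e}_j)=0$: your quadratic leftover $\gamma(\mathcal{T}^0(\overline{e}_\ell,\overline{e}_j),e_i)$ vanishes identically. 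Likewise $[\gamma_{\overline{e}_\ell},\gamma_{\overline{e}_j}]e_i=0$, because $\gamma_{\overline{e}_j}e_i$ is of type $(0,1)$ and is annihilated by $\gamma_{\overline{e}_\ell}$. So in your comparison formula applied to $\langle R^\nabla(\overline{e}_\ell,\overline{e}_j)e_i,e_k\rangle=0$ only the two linear terms survive, and $T^j_{ik,\overline{\ell}}=T^\ell_{ik,\overline{j}}$ follows at once from $J$-invariance alone, with no Bianchi identity and no further use of the symmetry $(a)$. This is mirrored exactly in the paper's computation, where the quadratic term $\overline{T^j_{ik}}\,\overline{T^p_{jq}}\,\overline{\varphi}_q\varphi_p$ is a $(1,1)$-form and therefore contributes only to the first identity, while the $(2,0)$-part of $(\Theta_2)_{ik}$ consists purely of the antisymmetrized derivative $\overline{T^j_{ik,\overline{\ell}}-T^\ell_{ik,\overline{j}}}$. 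Once you correct the evaluation of $\mathcal{T}^0$ on $(0,1)$-pairs, your argument closes completely and coincides, term by term, with the paper's proof.
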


\begin{proof} Let us use the notation of \cite{YZ} and denote by $\nabla e = \theta_1 e + \overline{\theta_2} \overline{e}$ the connection matrices of $\nabla$ under the frame $\{ e, \overline{e}\}$. Then $\nabla^0e = \theta_1 e$ as $Je=\sqrt{-1}e$ and $J\overline{e} = - \sqrt{-1}\overline{e}$. We have
$$ \theta_1 = \theta + \gamma , \ \ \ \gamma_{ij} = T^j_{ik} \varphi_k - \overline{T^i_{jk}} \overline{\varphi}_k, \ \ \  (\theta_2)_{ij} = \overline{ T^k_{ij} } \varphi_k, $$
where $\varphi$ is the unitary coframe dual to $e$ and $\theta $ the connection matrix of $\nabla^c$ under $e$.

By \cite[Lemma 5]{YZ}, we know that $g$ is Riemannian K\"ahler-like if and only if
$$ \Theta_2 := d\theta_2 - \theta_2 \theta_1 - \overline{\theta}_1 \theta_2 = 0.$$
Let us fix a point $x\in M$ and take a local unitary frame $e$ near $x$ so that $\theta_1$ vanishes at $x$. Note that this can always be managed for any given Hermitian connection by the same proof of \cite[Lemma 4]{YZ}. So at the point $x$, we have $\theta = - \gamma$, and by the structure equation together with the fact $\,^t\!\gamma'\varphi = - \tau$,  it yields that
$$ d\varphi = - \,^t\!\theta \varphi + \tau = \,^t\!\gamma \varphi + \tau = - \overline{\gamma'}\varphi. $$
Here $\gamma'$ is the $(1,0)$-part of $\gamma$. So at the point $x$, it follows that $\partial \varphi =0$ and $\overline{\partial} \varphi =- \overline{\gamma'}\varphi$. At $x$, we compute that
\begin{eqnarray*}
0 & = & (\Theta_2)_{ik} \ = \ d ((\theta_2)_{ik}) \ = \ \partial \,(\overline{T^j_{ik}} \varphi_j) + \overline{ \partial}\, (\overline{T^j_{ik}} \varphi_j)\\
& = & \overline{  T^j_{  ik, \overline{\ell}   }    } \,\varphi_{\ell} \varphi_j + \overline{  T^j_{  ik, \ell   }    }\, \overline{\varphi}_{\ell} \varphi_j - \overline{  T^j_{  ik} } \, \overline{ T^p_{jq}} \, \overline{\varphi}_q \varphi_p \\
& = & \frac{1}{2} \big( \overline{  T^j_{ik, \overline{\ell}}   - T^{\ell}_{  ik, \overline{j} }    } \big) \varphi_{\ell} \varphi_j + \big( \overline{    T^j_{ik, \ell} - T^r_{ik}   T^j_{r\ell }   }   \big) \overline{\varphi}_{\ell} \varphi_j,
\end{eqnarray*}
which establishes the two identities in the lemma.
\end{proof}

Let us specialize to a Lie-Hermitian manifold $(G,J,g)$. Let $e$ be an left invariant unitary frame, with dual coframe $\varphi$. It yields that
$$ 2T^j_{ik} = - C^j_{ik} - D^j_{ik} + D^j_{ki},  \ \ \ \Gamma^j_{ik} = D^j_{ik} + T^j_{ik} ,$$
where $C$, $D$ are structure constants as before, but $\Gamma$ now stands for the coefficients for the the connection $\nabla^0$, which is an abuse of notation as the symbol $\Gamma$ already appeared in the previous section. Note that one could simply write $ 2T = -C -D + \,^t\!D$ and $2\Gamma = -C + D + \,^t\!D$. It follows that
$$ T^j_{ik, \overline{\ell}} = \sum_{r=1}^n \big( T^j_{rk} \overline{\Gamma^i_{r\ell }} + T^j_{ir} \overline{\Gamma^k_{r\ell }} - T^r_{ik} \overline{\Gamma^r_{j\ell }} \big) .
$$
Plugging this into (\ref{eq:Tbar}) and using the fact that $\Gamma - \,^t\!\Gamma = -C$, we obtain
\begin{equation}
\sum_{r=1}^n \big( T^j_{rk} \overline{\Gamma^i_{r\ell }} + T^j_{ir} \overline{\Gamma^k_{r\ell }} + T^r_{ik} \overline{C^r_{j\ell }} - T^{\ell }_{rk} \overline{\Gamma^i_{rj }}  - T^{\ell }_{ir} \overline{\Gamma^k_{rj }}  \big) = 0. \label{eq:TCbar}
\end{equation}

\begin{proof}[\textbf{Proof of Theorem \ref{nilRKL}}]
Let us take advantage of the fact that $J$ is nilpotent. By \cite[Theorem 12]{CFGU}, there will be a left invariant unitary frame $e$, under which
\begin{equation}
 C^j_{ik} = D^i_{jk} = 0    \ \ \ \mbox{unless} \ \ j > i,k.  \label{CDnil}
 \end{equation}
In particular, $T^n_{\ast n}= \Gamma^n_{\ast n} =0$. Hence if we let $k=\ell =n$ in (\ref{eq:TCbar}), it follows that
$$ \sum_{r=1}^n \big(  T^j_{rn} \overline{\Gamma^i_{rn }}  -  T^{n}_{ir} \overline{\Gamma^n_{rj }}  \big) = 0. $$
That is, $ \sum_r D^j_{nr} \overline{D^i_{nr}} = - \sum_r C^n_{ir} \overline{C^n_{jr} }$. If we denote by $C^n$ the matrix with $(i,j)$-th entry $C^n_{ij}$ and by $D_n$ the matrix with $(i,j)$-th entry $D^j_{ni}$, then this means that
$$ D_n^{\ast } D_n + C^n (C^n)^{\ast} =0 .$$
Both terms are semi-positive definite Hermitian matrices and we conclude from the above identity that $C^n=D_n=0$, which implies that $C$ and $D$ will be zero whenever any of the indices is $n$. The  condition (\ref{CDnil}) on $C$ and $D$ now indicates that $C^{\ast}_{n-1,\ast} =0$, $D^{n-1}_{\ast \ast} = D^{\ast }_{\ast,n-1}=0$. Therefore we can take $k=\ell =n-1$ in (\ref{eq:TCbar}) and repeat the argument, which leads to the conclusion that $C=D=0$ whenever any of the indices is $n-1$. Keep on going, and thus we conclude in the end that $C=D=0$, which implies $G$ is abelian. This completes the proof of Theorem \ref{nilRKL}.
\end{proof}

\vsv
\vsv
\vsv
\vsv

\noindent\textbf{Acknowledgments.} The first named author is grateful to the Mathematics Department of Ohio State University for the nice research environment and the warm hospitality during his stay. The second named author would like to thank his collaborators Luigi Vezzoni, Qingsong Wang, and Bo Yang in their previous works, which laid the foundation for the computation carried out in the present paper.

\vs

\end{document}